\documentclass{amsart}

\usepackage{amsmath}
\usepackage{amssymb}
\usepackage{amsthm}
\usepackage{amsfonts}
\usepackage{array}
\usepackage[all]{xy}
\usepackage{graphicx}

\textwidth = 6.5 in
\textheight = 8.3 in

\oddsidemargin = 0 true in
\evensidemargin = 0 true in

%macros

\newcommand{\Q}{\mathbb{Q}}
\newcommand{\Z}{\mathbb{Z}}
\newcommand{\N}{\mathbb{N}}

\DeclareMathOperator{\im}{im}
\DeclareMathOperator{\kr}{ker}

\newcommand{\isom}{\cong}
\newcommand{\goto}{\rightarrow}

\newcommand{\of}{\circ}

%%Khovanov Homology%%
\newcommand{\unkh}{\overline{Kh}}

\newcommand{\Qsub}[1]{\Q_{(#1)}}
\newcommand{\one}{\mathbf{1}}
\newcommand{\eX}{\mathbf{X}}

\theoremstyle{plain}

\newtheorem{theorem}{Theorem}
\newtheorem{conjecture}{Conjecture}

\newtheorem{corollary}[theorem]{Corollary}
\newtheorem{claim}{Claim}

\begin{document}

\title[The Khovanov Homology of $(p,-p,q)$ Pretzel Knots]{The Khovanov Homology of $(p,-p,q)$ Pretzel Knots}

\date{\today}

\author[Starkston]{Laura Starkston}
\maketitle

\begin{abstract}
In this paper, we compute the Khovanov homology over $\Q$ for $(p,-p,q)$ pretzel knots for $3\leq p \leq15$, $p$ odd, and arbitrarily large $q$. We provide a conjecture for the general form of the Khovanov homology  of $(p,-p,q)$ pretzel knots. These computations reveal that these knots have thin Khovanov homology (over $\Q$ or $\Z$). Because Greene has shown that these knots are not quasi-alternating, this provides an infinite class of non-quasi-alternating knots with thin Khovanov homology. 
\end{abstract}

\section{Introduction}
{
In \cite{Khovanov}, Khovanov introduced his categorification of the Jones polynomial, a graded homology theory, which is a powerful link invariant. As with the Jones polynomial, there is a finite way to compute the Khovanov homology of a link given a diagram with finitely many crossings. A number of programs have been written to compute the Khovanov homology of a link. The programs by Bar-Natan and Green can be implemented in Mathematica \cite{KAtlas}. A faster program called KhoHo was written by Shumakovitch \cite{KhoHo}. Of course, one must have a finite description of the knot or link, in order to use these programs to obtain its Khovanov homology. 

We set out to find the Khovanov homology of infinite classes of knots, in particular infinite classes of pretzel knots. Lee proves in \cite{Lee} that the Khovanov homology of alternating links (over rational coefficients) is completely determined by the Jones polynomial and knot signature. Ozsv\'{a}th  and Szab\'{o} defined a larger class of knots they call quasi-alternating links \cite{quasidef} and  Manolescu and Ozsv\'{a}th extended Lee's results to this class in \cite{quasi}. In \cite{Champanerkar} the quasi-alternating status of pretzel links was explored by Champanerkar and Kofman. They classify many pretzel links as either quasi-alternating, or not, leaving open the status of only $2$ classes of $3$ column pretzel links: $P(p,-p,q)$ and $P(p+1,-p,q)$. Greene completes this classification in \cite{Greene}, and finds that the $P(p,-p,q)$ knots are not quasi-alternating, when $q>p$ thus their Khovanov homology is not necessarily determined by their Jones polynomial and signature.

We look at the Khovanov homology of the $P(p,-p,q)$ knots ($q\geq p > 2$, $p$ odd). We utilize the simplicity of the diagrams resulting from resolving one of the crossings in the third column, to make an inductive argument in terms of $q$, which is completed using the fact that these are slice knots. Thus, given a satisfactory base case for some odd value of $p$, we can provide a formula for the Khovanov homology of all $P(p,-p,q)$ knots for $q$ sufficiently high. We show the explicit proof for the $p=3$ case, which applies in exactly the same manner for other values of $p$ once we compute a base case meeting certain criteria. We have already verified that such base cases exist for all odd values $3 \leq p \leq 15$ .

\textbf{Acknowledgements:} Many thanks to Peter Kronheimer for his guidance and advice throughout this project. Thank you also to Joshua Greene for the information on the quasi-alternating status of these knots.
}
\section{Khovanov Homology}
{
 \subsection{Graded Modules}
 {
 Let $A$ be the free graded module generated by two elements, $\one$ and $\eX$ over a ring $R$. We assign a quantum grading to $A$ so that the copy of $R$ generated by $\one$ has quantum grading $1$ and the copy of $R$ generated by $\eX$ has quantum grading $-1$. This induces a quantum grading on $A^{\otimes q}$ where the copy of $R$ generated by $v_1\otimes \cdots \otimes v_q$ has quantum grading equal to the sum of the quantum gradings of $v_1$ through $v_q$.
 
 We will denote quantum gradings by subscripts in parentheses. For example $A \isom R_{(-1)}\oplus R_{(1)}$. Let $\cdot \{k\}$ denote a quantum grading shift up by $k$. So $M_{(q)}\{k\}=M_{(q+k)}$.
 
 We can turn $A$ into a bialgebra by defining a multiplication, $m$, a comultiplication, $\Delta$, a unit, and a counit. We will only be concerned with $m$ and $\Delta$ here but Khovanov defines the others in \cite{Khovanov}.
 
   \begin{eqnarray*}
  m(\one \otimes \one) & = & \one.\\
  m(\one \otimes \eX) & = & \eX.\\
  m(\eX \otimes \one) & = & \eX.\\
  m(\eX \otimes \eX) & = & 0.
  \end{eqnarray*}
  \begin{eqnarray*}
  \Delta(\one) & = & \one \otimes \eX + \eX \otimes \one.\\
  \Delta(\eX) & = & \eX \otimes \eX.
  \end{eqnarray*}

 }
 \subsection{$n$-cube of smoothings}
 {
  For any given crossing there are two ways to resolve the crossing to eliminate it. We call these the $0$-smoothing and the $1$-smoothing according to the convention in Figure \ref{smoothings}.
  \begin{figure}[here]
  \includegraphics[scale=1.2]{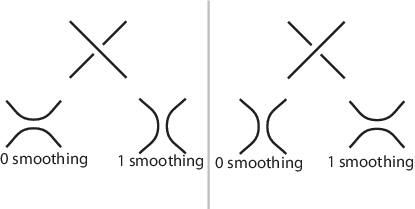}
  \caption{The $0$ and $1$ resolutions of a crossing}
  \label{smoothings}
  \end{figure}
  
  Given a knot or link diagram $D$, with $n$ crossings, there are $2^n$ different total resolutions of the diagram. Each total resolution is a collection of simple closed curves. Additionally, each resolution corresponds to an $n$-tuple of $0$s and $1$s. Fixing an ordering of the crossings, thus gives an identification of the collection of total resolutions of $D$ with the vertices of the unit $n$-cube. See Figure \ref{trefoilcube} for an example.
 
 \begin{figure}[here]
 \includegraphics[scale=2]{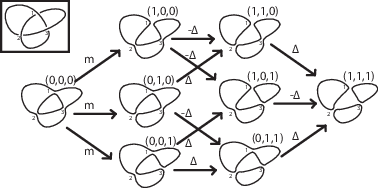}
 \caption{The cube of total smoothings of the trefoil knot}
 \label{trefoilcube}
 \end{figure}
  
  To each vertex $v=(x_1,\cdots, x_n) \in \{0,1\}^n$, the \emph{weight} of the corresponding smoothing, $D_v$ is given by $w(D_v)=\sum_{i=1}^nx_i$ (i.e. the number of $1$-smoothings). Each of the edges of the $n$-cube corresponds to a change of one crossing from a $0$-smoothing to a $1$-smoothing. We label this edge with a map. The map is labeled $m$ if two circles in the $0$-smoothing merge to one circle in the $1$-smoothing, and $\Delta$ if one circle in the $0$-smoothing divides into two in the $1$-smoothing. We add in signs so that each square anti-commutes.
  
  We then translate the diagram of smoothings into an algebraic diagram of free modules with a functor $\mathcal{F}$. Each complete smoothing $D_v$ translates to $\mathcal{F}(D_v)=A^{\otimes k}\{w(D_v)\}$ where $k$ is the number of closed curves in the smoothing. The maps $m$ and $\Delta$ translate to the multiplication and comultiplication on the copies of $A$ corresponding to the circles that are merging or dividing.
  
 The resulting diagram for the trefoil is
 $$ \xymatrix
 {
  & A\{1\} \ar[r]^{-\Delta} \ar[dr]^{-\Delta\qquad\qquad\qquad\qquad\qquad\qquad\qquad} & A\otimes A \{2\} \ar[dr]^{\Delta} & \\
 A\otimes A \ar[ur]^{m} \ar[r]^{m} \ar[dr]^{m} & A\{1\} \ar[ur]_{\Delta\qquad\qquad\qquad\qquad\qquad\qquad} \ar[dr]^{-\Delta \qquad\qquad\qquad\qquad\qquad\qquad\qquad} & A\otimes A \{ 2\} \ar[r]^{-\Delta} & A\otimes A \otimes A \{3\}\\
  & A\{1\} \ar[r]^{\Delta} \ar[ur]^{\qquad\qquad\qquad\qquad\qquad\Delta} & A\otimes A \{2\} \ar[ur]_{\Delta} & 
 .}$$
 }
 \subsection{Khovanov Complex}
 {
 From the $n$-cube of modules, we obtain a chain complex for the diagram, $\overline{CKh}(D)$ in the following manner. 
 $$\overline{CKh}^i(D)=\bigoplus_{\{D_v \colon w(D_v)=i\}}\mathcal{F}(D_v).$$
 In other words, if we align the vertices of the $n$-cube such that vertices of the same weight are in the same column as above, we simply take the direct sum down each column. The differentials are given by the sums of the maps in the cube from smoothings of weight $i$ to smoothings of weight $i+1$. Because each square anticommutes, $d\of d=0$.
 
 We let $\cdot [k]$ denote a shift up by $k$ in the homological grading ($\overline{CKh}^i(D)[k]=\overline{CKh}^{i+k}(D)$). Let $n_+$ be the number of $(+)$ crossings and $n_-$ be the number of $(-)$ crossings according to the convention in figure \ref{posneg}.
 
 \begin{figure}[here]
 \includegraphics[scale=.7]{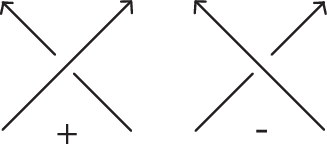}
 \caption{Positive and negative crossings}
 \label{posneg}
 \end{figure}
 
 Let $CKh(D)=\overline{CKh}(D)[-n_-]\{n_+-2n_-\}$. Let $\overline{Kh}(D)$ and $Kh(D)$ be the cohomologies of $\overline{CKh}(D)$ and $CKh(D)$ respectively. $Kh(D)$ is Khovanov's link invariant. $\overline{Kh}(D)$ is specific to the particular diagram in the sense that it depends on the number of $+$ and $-$ crossings in the diagram.
 }
 
\subsection{A Long Exact Sequence}
{
Suppose we start with a diagram $D$, and resolve only one crossing. Let the diagrams with the $0$-resolution and $1$-resolution of this crossing be denoted $D(*0)$ and $D(*1)$ respectively. It is clear from the construction that $\overline{CKh}(D(*0))$ and $\overline{CKh}(D(*1))$ are disjoint subcomplexes of $\overline{CKh}(D)$, except that $\overline{CKh}(D(*1))$ has its homological and quantum gradings shifted up by $1$ in $\overline{CKh}(D)$. Additionally their union consists of all the vertices of $\overline{CKh}(D)$ and $\overline{CKh}(D)$ is the total complex of $\overline{CKh}(D(*0))\goto \overline{CKh}(D(*1))$. Therefore we get a short exact sequence of complexes:
$$0 \goto \overline{CKh}(D(*1))\{1\}[1] \goto \overline{CKh}(D) \goto \overline{CKh}(D(*0)) \goto 0.$$
This induces a long exact sequence on the homology:
\begin{equation}
\cdots \goto \overline{Kh}^{n-1}(D(*1))\{1\} \goto \overline{Kh}^{n}(D) \goto \overline{Kh}^n(D(*0)) \goto \overline{Kh}^n(D(*1))\{1\} \goto \cdots.
\label{LES}
\end{equation}
}
}
\section{Lee's variant of Khovanov Homology and Rasmussen's s-invariant}
{
\subsection{Lee's Invariant}
{
Lee considered a similar construction in \cite{Lee}. Her complex comes from the same $n$-cube of smoothings, and the modules associated to the vertices are the same as Khovanov's, although Lee takes the coefficient ring to be the rational numbers. The main difference is that the maps $m$ and $\Delta$ are slightly modified. In Lee's construction she uses the following maps as the multiplication and comultiplication on $A$.
   \begin{eqnarray*}
  m'(\one \otimes \one) & = & \one.\\
  m'(\one \otimes \eX) & = & \eX.\\
  m'(\eX \otimes \one) & = & \eX.\\
  m'(\eX \otimes \eX) & = & \one.
  \end{eqnarray*}
  \begin{eqnarray*}
  \Delta'(\one) & = & \one \otimes \eX + \eX \otimes \one.\\
  \Delta'(\eX) & = & \eX \otimes \eX + \one \otimes \one.
  \end{eqnarray*}
  Let $CKh'(L)$ be the analogous complex to that constructed in section $2$, but replacing $m$ and $\Delta$ by $m'$ and $\Delta'$ respectively. With the grading shifts included, Khovanov's original maps $m$ and $\Delta$ were constructed so that the differentials preserve quantum grading. (Each map $m$ or $\Delta$ decreases quantum grading by $1$, but as the weight increases by $1$, there is a quantum grading shift by $1$ which cancels out the decrease in quantum grading by the $m$ or $\Delta$ map.)
 
 However, Lee's differentials do not preserve quantum grading, and $\Delta(\eX)$ does not even have a homogeneous quantum grading. However each monomial in the image of some monomial $x$ under a differential map has quantum grading greater than or equal to the quantum grading of $x$. Let $q(x)$ denote the quantum grading of $x$. Then the differential respects the following filtration:
  $$F^p CKh'(L) = \{x\in CKh'(L) \colon q(x) \geq p\}.$$
This filtration together with the homological grading induces a spectral sequence whose $E_0$ term is $CKh'(L)\isom CKh(L)$. The differential on the $0^{th}$ page, $d_0\colon E_0^{q,r}\goto E_0^{q,r+1}$ is the part of Lee's differential that preserves quantum grading. This is exactly Khovanov's original differential, so the $E_1$ page is given by $E_1^{q,r}=Kh^r(K)_{(q)}$.

\textbf{Important Note:} While it is typical in writing the spectral sequence for a filtered differential graded module $A$  to use notation such that $E_1^{q,r} \isom H^{r+q}(F^rA/F^{r+1}A)$ and the $r^{th}$ differential has bidegree $(r, 1-r)$ it is more natural in this context to let $E_1^{r,q}=F^qKh^r(L)/F^{q+1}Kh^r(L) \isom Kh^r(K)_{(q)}$. Using this indexing, the $r^{th}$ differential has bidegree $(1,r)$ (homological degree increases by $1$ and filtration degree increases by $r$).

Lee proves that the rank of $Kh'(L)$ is simply determined by the number of components of the link, $L$. If $L$ has $n$ components the rank of $Kh'(L)$ is $2^n$. Therefore if $K$ is a knot, $Kh'(K)$ has rank $2$.
}
\subsection{Rasmussen's $s$-invariant}
In \cite{Rasmussen} Rasmussen asks, what are the quantum gradings of these two remaining copies of $\Q$ in the $E_{\infty}$ page of the spectral sequence described above? He proves that the difference between the two quantum gradings is exactly $2$ and then defines an invariant $s(K)$ to be the average of these two quantum gradings. Furthermore he proves that $s(K)$ provides a lower bound on the slice genus of the knot:
$$|s(K)| \leq 2g^*(K).$$
}

\section{Main Result}
We compute the Khovanov homology for the class of $(3,-3,q)$ pretzel knots. This same proof can be used to compute $P(p,-p,q)$ knots once an appropriate base case for the induction can be found.
{\begin{theorem}
Let $K_q=P(3,-3,q)$, the 3-stranded pretzel knot where $q \geq 5$. Then
\begin{eqnarray*}
Kh^0(K_q) & = & \Qsub{-1} \oplus \Qsub{1}\\
Kh^i(K_q) & = & 0 \;\;\;\; (0 < i \leq q-4)\\
Kh^{q-3}(K_q) & = & \Qsub{1+2(q-4)}\\
Kh^{q-2}(K_q) & = & \Qsub{5+2(q-4)}\\
Kh^{q-1}(K_q) & = & \Qsub{5+2(q-4)}\\
Kh^{q}(K_q)& = & \Qsub{7 + 2(q-4)} \oplus \Qsub{9 + 2(q-4)}\\
Kh^{q+1}(K_q) & = & \Qsub{11+2(q-4)}\\
Kh^{q+2}(K_q) & = & \Qsub{11+2(q-4)}\\
Kh^{q+3}(K_q) & = & \Qsub{15+2(q-4)}
\end{eqnarray*}
and $Kh^j(K_q)=0$ for all other values of $j$.
\label{mainthm}
\end{theorem}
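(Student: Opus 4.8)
The plan is to induct on $q$ using the long exact sequence $(\ref{LES})$, with a single machine computation as the base case. For $q=5$ the knot $P(3,-3,5)$ has an $11$-crossing diagram, so $Kh^\ast(K_5)$ can be obtained from KhoHo or from the program of Bar-Natan and Green and checked against the claimed formula; this is the ``base case meeting certain criteria'' of the introduction, and one such case suffices since the inductive step passes from $q-1$ to $q$.

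For the step, fix $q\geq 6$ and resolve a single crossing of the third (the $q$-crossing) column in the standard diagram $D$ of $K_q$. One of the two resolutions simply deletes that crossing from the column and is a diagram of $K_{q-1}=P(3,-3,q-1)$; the other ``cuts'' the column, so that its remaining $q-1$ crossings become Reidemeister~I kinks, and reduces to a diagram of the $2$-strand pretzel $P(3,-3)$ --- which one checks is the $2$-component unlink, so that its Khovanov homology is $A^{\otimes 2}$ sitting in a single homological degree. Substituting $D(\ast0)$ and $D(\ast1)$ into $(\ref{LES})$, after determining which is the $0$- and which the $1$-resolution and carrying the $[1]\{1\}$ shift of the short exact sequence together with the $[-n_-]\{n_+-2n_-\}$ normalizations of all three diagrams (this bookkeeping is what accounts for the uniform $+2(q-4)$ appearing in the $q$-gradings), the sequence expresses every $Kh^\ast(K_q)$ in terms of $Kh^\ast(K_{q-1})$, the homology of the reduced diagram, and the connecting maps joining them. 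In the homological degrees away from where the reduced diagram contributes, $(\ref{LES})$ identifies $Kh^\ast(K_q)$ with a grading-shifted copy of $Kh^\ast(K_{q-1})$; by the inductive hypothesis this yields the vanishing $Kh^i(K_q)=0$ for $0<i\leq q-4$ and carries the inductive ``staircase'' up by one in homological degree and two in $q$-grading, to the degrees $q-3,\dots,q+3$. The remaining degrees, together with $Kh^0(K_q)$, are pinned down once we know the rank of the single connecting homomorphism relating the reduced diagram's four generators to the appropriate summand of $Kh^\ast(K_{q-1})$.

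Determining that rank is the main obstacle. The graded Euler characteristic of $Kh^\ast(K_q)$ is the (unnormalized) Jones polynomial of $P(3,-3,q)$, computable straight from the pretzel description; matching it with the contributions of $K_{q-1}$ and the unlink fixes the \emph{signed} generator count in each homological degree, so the only freedom left is whether the connecting map attains its maximal rank --- equivalently, whether $Kh^\ast(K_q)$ is the stated answer or that answer together with one superfluous cancelling pair of generators, whose bidegrees are determined by the grading bookkeeping. To eliminate that pair we invoke that $P(3,-3,q)$ is slice (indeed ribbon): then Rasmussen's bound gives $s(K_q)=0$, so the two surviving classes of the Lee spectral sequence lie in homological degree $0$ at $q$-gradings $-1$ and $1$. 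The eight generators of the claimed top block already cancel in four ``knight moves'' under the first nontrivial Lee differential (raising homological degree by $1$ and $q$-grading by $4$), leaving precisely these two, so the putative extra pair would have no admissible partner; its presence would force $\operatorname{rank}Kh'(K_q)>2$, a contradiction. Hence the connecting map has maximal rank and $Kh^\ast(K_q)$ is exactly as stated, closing the induction. The two places I expect to demand real care are the sign-and-writhe bookkeeping that locates the top block and its exact $q$-gradings --- a slip of two there would propagate through every subsequent $q$ --- and making the Lee-spectral-sequence step airtight, i.e.\ verifying that in every bidegree the extra pair could occupy, a knight-move cancellation is genuinely obstructed.
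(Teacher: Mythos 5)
Your proposal follows the paper's own proof essentially step for step: induction on $q$ via the long exact sequence, with one resolution yielding the $2$-component unlink and the other yielding $P(3,-3,q-1)$, the isomorphism $\unkh^{i-1}(D_2)\{1\}\cong\unkh^i(D)$ away from the interesting degrees, and the leftover ambiguity in $Kh^0$, $Kh^1$ eliminated by combining sliceness of $K_q$, $s(K_q)=0$, and the Lee spectral sequence. One minor imprecision worth flagging: the four-term exact segment of the LES leaves \emph{four} possibilities (the stated answer, or the stated answer with an extra cancelling pair at $q$-grading $1$, at $q$-grading $3$, or at both), so the dichotomy ``maximal rank or one superfluous pair with determined bidegrees'' is too coarse --- but your Lee-spectral-sequence elimination kills all four cases for the same reason the paper's does, since every potential extra class sits in a bidegree from which all incoming and outgoing differentials are forced to vanish.
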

\begin{proof}
We proceed by induction on $q$. The case for $q=5$ can be verified computationally \cite{KAtlas}.

For $q>5$ we consider the following 3 diagrams.
\begin{center}
\includegraphics[scale = 1]{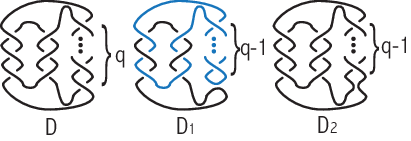}
\end{center}

Notice that $D_1$ is the diagram $D$ with the last crossing in the last column resolved in the $0$-smoothing. Likewise $D_2$ is $D$ with the crossing resolved in the $1$-smoothing. Therefore the long exact sequence (\ref{LES}) gives
$$\cdots \goto \unkh^{n-1}(D_2)\{1\} \goto \unkh^n(D) \goto \unkh^n(D_1) \goto \unkh^n(D_2)\{1\} \goto \cdots.$$
We observe that $D_1$ is a diagram for the $2$-component unlink. Therefore 
\begin{eqnarray*}
Kh^0(D_1) &=& \Qsub{-2} \oplus \Qsub{0}^2 \oplus \Qsub{2}.\\
Kh^i(D_1) & = & 0 \;\;\;\; (\forall i \neq 0).
\end{eqnarray*}
Whichever orientation we put on $D_1$, $n_+ = 3+q-1$ and $n_- = 3$. Therefore
\begin{eqnarray*}
\unkh^3(D_1) & = & \Qsub{2-q} \oplus \Qsub{4-q}^2 \oplus \Qsub{6-q}.\\
\unkh^i(D_1) & = & 0 \;\;\;\; (\forall i \neq 3).
\end{eqnarray*}
Inserting these values into the long exact sequence (\ref{LES}) we get the following
$$0 \goto \unkh^{i-1}(D_2)\{1\} \goto \unkh^i(D) \goto 0$$
for all $i\neq 3, 4$. This implies that $\unkh^{i-1}(D_2)\{1\} \isom \unkh^i(D)$ for $i\neq 3,4$.

Noticing that $D_2$ is a diagram for the $P(3,-3,q-1)$ pretzel knot, we can use the inductive hypothesis to get its Khovanov Homology. Noticing that for $D_2$, $n_+ = 3+q-1$ and $n_- = 3$ the unnormalized Khovanov homology for $D_2$ is given by the column on the left below. The right hand column shifts the gradings by $1$, to give the appropriate isomorphisms. 
$$\begin{array}{rclcrcl}
\unkh^3(D_2) & = & \Qsub{3-q} \oplus \Qsub{5-q} & \overset{\{1\}}{\goto}& \Qsub{4-q} \oplus \Qsub{6-q} &  &\\
\unkh^{3+i}(D_2) & = & 0 \quad (0 < i \leq q-5)&& 0 & \isom & \unkh^{4+i}(D) \quad (1 < i \leq q-5)\\
\unkh^{q-1}(D_2) & = & \Qsub{1+q-5} &\overset{\{1\}}{\goto}& \Qsub{1+q-4}& \isom & \unkh^{q}(D)\\
\unkh^{q}(D_2) & = & \Qsub{5+q-5}&& \Qsub{5+q-4} & \isom & \unkh^{q+1}(D)\\
\unkh^{q+1}(D_2) & = & \Qsub{5+q-5} &\overset{\{1\}}{\goto}& \Qsub{5+q-4} & \isom & \unkh^{q+2}(D)\\
\unkh^{q+2}(D_2) & = & \Qsub{7 + q-5} \oplus \Qsub{9+q-5} && \Qsub{7+q-4}\oplus \Qsub{9+q-4} & \isom & \unkh^{q+3}(D)\\
\unkh^{q+3}(D_2) & = & \Qsub{11+q-5} &\overset{\{1\}}{\goto}& \Qsub{11+q-4} & \isom & \unkh^{q+4}(D)\\
\unkh^{q+4}(D_2) & = & \Qsub{11+q-5} && \Qsub{11+q-4} & \isom & \unkh^{q+5}(D)\\
\unkh^{q+5}(D_2) & = & \Qsub{15+q-5} &\overset{\{1\}}{\goto}& \Qsub{15+q-4} & \isom & \unkh^{q+6}(D)
\end{array}$$
After we shift the gradings back to get $Kh(D)$, we find that we have proven the result for $Kh^i(K_q)$ for all $i\neq 0,1$. We are left to find $\unkh^3(D)$ and $\unkh^4(D)$ (which normalize to $Kh^0(K_q)$ and $Kh^1(K_q)$ under the grading shifts). We have the following exact sequence from the long exact sequence

\begin{equation}
\label{SES}
0 \goto \unkh^3(D) \overset{\alpha}{\goto} \Qsub{2-q} \oplus \Qsub{4-q}^2 \oplus \Qsub{6-q} \overset{\beta}{\goto} \Qsub{4-q}\oplus \Qsub{6-q} \overset{\gamma}{\goto} \unkh^4(D) \goto 0.
\end{equation}
Since $\alpha, \beta,$ and $\gamma$ preserve quantum grading, basic linear algebra implies that
$$\Qsub{2-q}\oplus \Qsub{4-q} \subseteq \kr(\beta) = \im(\alpha).$$
Since $\alpha$ is injective, $\unkh^3(D) \isom \im(\alpha)$ therefore
$$\Qsub{2-q} \oplus \Qsub{4-q} \subseteq \unkh^3(D).$$
This results in $4$ possibilities for the isomorphism class of $\unkh^3(D)$. Using the exact sequence (\ref{SES}), we can determine the isomorphism class of $\unkh^4(D)$ corresponding to each of these four possibilities:
$$
\begin{array}{rclrcl}
\unkh^3(D) & = & \Qsub{2-q}\oplus \Qsub{4-q} & \unkh^4(D) & = & 0,\\
\unkh^3(D) & = & \Qsub{2-q}\oplus \Qsub{4-q}^2 & \unkh^4(D) & = & \Qsub{4-q}, \\
\unkh^3(D) & = & \Qsub{2-q}\oplus \Qsub{4-q} \oplus \Qsub{6-q} & \unkh^4(D) & = & \Qsub{6-q},\\
\unkh^3(D) & = & \Qsub{2-q}\oplus \Qsub{4-q}^2 \oplus \Qsub{6-q} & \unkh^4(D) & = & \Qsub{4-q} \oplus \Qsub{6-q}.
\end{array}
$$
After normalization these four possibilities are:
$$
\begin{array}{rclrcl}
Kh^0(D) & = & \Qsub{-1}\oplus \Qsub{1} & Kh^1(D) & = & 0,\\
Kh^0(D) & = & \Qsub{-1}\oplus \Qsub{1}^2 & Kh^1(D) & = & \Qsub{1}, \\
Kh^0(D) & = & \Qsub{-1}\oplus \Qsub{1} \oplus \Qsub{3} & Kh^1(D) & = & \Qsub{3},\\
Kh^0(D) & = & \Qsub{-1}\oplus \Qsub{1}^2 \oplus \Qsub{3} &Kh^1(D) & = & \Qsub{1} \oplus \Qsub{3}.
\end{array}
$$
We aim to show that the first of these four possibilities is correct.

We now utilize the results of Lee and Rasmussen, described in section 3. In particular we recall Rasmussen's invariant, $s(K)$ and its relation to the slice genus.
$$|s(K)| \leq 2g^*(K).$$
\begin{claim}
$g^*(K_q) = 0$ and thus $s(K_q)=0$.
\end{claim}
We observe that there is a cobordism between $K_q$ and the two component unlink. We may resolve one crossing of $K_q$ to obtain a diagram of a link in the isotopy class of the two component unlink. Resolving the crossing amounts to adding in a band splitting the knot into two components. (See Figure \ref{cobordism}).

\begin{figure}[here]
\includegraphics[scale = 1]{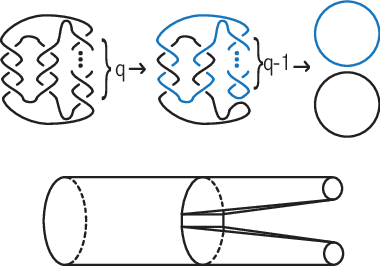}
\caption{Cobordism between $K_q$ and the 2-component unlink}
\label{cobordism}
\end{figure}

We may then cap off the unlink end with 2 discs to obtain a disc bounding $K_q$. Therefore $K_q$ is a slice knot ($g^*(K_q)=0$). Therefore $|s(K_q)| \leq 0$ so $s(K_q)=0$.

We now complete the proof. Because $s(K_q) = 0$, the remaining copies of $\Q$ in the $E_{\infty}$ page of Lee's spectral sequence must be in quantum gradings $-1$ and $1$. The only copies of $\Q$ in those gradings in the $E_1$ term are in the zeroeth homological grading (Table \ref{spectral}). Therefore all differentials on these two copies of $\Q$ must be trivial so that they survive to the $E_{\infty}$ page. All other copies of $\Q$ on the $E_1$ page must not survive to the $E_{\infty}$ page.
\begin{table}[here]
$$
\begin{array}{|c|c|c|c|c|c|c|c|c|c|c|c|c|}
\hline
q+3& & & & & & & & & & & & \Q\\
\hline
 q+2& & & & & & & & & & \Q & & \\
\hline
q+1& & & & & & & & & & \Q & & \\
\hline
q& & & & & & & & \Q & \Q & & & \\
\hline
q-1& & & & & & & \Q & & & & & \\
\hline
q-2& & & & & & & \Q & & & & & \\
\hline
q-3& & & & & \Q & & & & & & & \\
\hline
\vdots& & & & \vdots & \vdots & & & & & & & \\
\hline
2& & &  & & & & & & & & & \\
\hline
1& & \Q^a& \Q^b& & & & & & & & & \\
\hline
0& \Q & \Q^{1+a} & \Q^b & & & & & & & & & \\
\hline
 & \;\; -1 \;\;& \;\;\; 1 \;\;\; & \;\;\; 3 \;\;\;& \cdots & 1+ \tau & 3+\tau & 5+\tau & 7+\tau & 9+\tau & 11+\tau & 13+\tau & 15+\tau \\
\hline
\end{array}
$$
\caption{The $E_1$ page of the Spectral Sequence converging to $Kh'(K_q)$. Note that all empty boxes and boxes that are not shown are trivial. $\tau=2(q-4)$. The vertical axis corresponds to the homological grading in Khovanov homology while the horizontal axis corresponds to the filtration which corresponds to the quantum grading on the $E_1$ page.}
\label{spectral}
\end{table}

Recall that the $r^{th}$ differential goes up $1$ and over $r$, because of an indexing that differs from the standard indexing for a spectral sequence induced by a filtration. (See the note in section 3.1 for further explanation). Let $d_r^{p,q}$ denote the differential on the $r^{th}$ page from $E_r^{p,q}$ to $E_r^{p+1,q+r}$. We know that $d_r^{0,-1} = 0$ and $d_r^{0,1}$ acts trivially on one copy of $\Q$ for every $r$ based on the value of $s(K_q)$. Because the row corresponding to the second homological grading has only zeros, $d_r^{1,1}=0$, for all $r\geq 1$. Thus if $a\neq 0$, an additional copy of $\Q$ will survive in $E_{\infty}^{1,1}$, contradicting Lee's result that there can only be two copies of $\Q$ on the $E_{\infty}$ page. Therefore $a=0$ and $d_r^{0,1}=0$ for all $r\geq 1$. Because the row corresponding to the first homological grading has zeros in quantum gradings greater than $3$, $d_r^{0,3}=0$ for all $r\geq 1$. Therefore if $b\neq 0$, an additional copy of $\Q$ will survive in $E_{\infty}^{0,3}$, again contradicting Lee's result. Therefore $a=b=0$, and the Khovanov homology of $K_q$ is as stated in the theorem.
\end{proof}
}

\section{Coefficients in $\Z$}
{
While Lee's spectral sequence only applies to Khovanov homology with coefficients in $\Q$, the long exact sequence applies with arbitrary coefficients. We can compute a base case over $\Z$ using \cite{KAtlas} or \cite{KhoHo}. Then we obtain isomorphisms as in the previous section  $\unkh^{i-1}(P(3,-3,q-1))\{1\} \isom \unkh^i(P(p,-p,q))$ for $i\neq 3,4$ which determines the values of $Kh(P(3,-3,q)$ in homological degree $r\neq 0,1$. The values in homological degree $r\geq q-3$ or $r<0$ are completely determined by the base case. We note that support of the Khovanov homology in these homological degrees lies in the two main diagonals. The values in homological degree $1<r<q-3$ are determined by $Kh^1(P(3,-3,q-r+1))$. Given our results over $\Q$ combined with the ``interesting'' part of the long exact sequence, we find that $Kh^0$ has a copy of $\Z$ in each of the $-1^{st}$ and $1^{st}$ gradings and $Kh^i$ is either only $0$ or has some torsion contained in the two main diagonals. We illustrate this process in tables \ref{Khov336} and \ref{Khov337}, indicating where the inductive step allows for one torsion factor with the marking $T_i, T_i'$. Here, $i\in \N$ indicates the value of $q$ for which that torsion factor first appeared in the Khovanov homology of the $(p,-p,q)$ pretzel knot. In fact $T_i, T_i'=0$ in these particular cases (verifiable by computation). 

\begin{table}[here]
$$
\begin{array}{|c|c|c|c|c|c|c|c|c|c|c|c|}
\hline
9&&&&&&&&&&&\Z\\\hline
8&&&&&&&&&\Z&\Z_2&\\\hline
7&&&&&&&&&\Z&&\\\hline
6&&&&&&&\Z&\Z\oplus\Z_2&&&\\\hline
5&&&&&&\Z&\Z_2&&&&\\\hline
4&&&&&&\Z&&&&&\\\hline
3&&&&\Z&\Z_2&&&&&&\\\hline
2&&&&&&&&&&&\\\hline
1&&T_{6}&T_{6}'&&&&&&&&\\\hline
0&\Z&\Z&&&&&&&&&\\\hline
&-1&1&3&5&7&9&11&13&15&17&19\\\hline
\end{array}
$$
\caption{The Khovanov homology over $\Z$ of $P(3,-3,6)$. The homological grading is on the vertical axis, and the quantum grading on the horizontal axis to display the similarity with the spectral sequence page above. The unknown pieces are $T_i,T_i'$ as described above.}
\label{Khov336}

$$
\begin{array}{|c|c|c|c|c|c|c|c|c|c|c|c|c|}
\hline
10&&&&&&&&&&&&\Z\\\hline
9&&&&&&&&&&\Z&\Z_2&\\\hline
8&&&&&&&&&&\Z&&\\\hline
7&&&&&&&&\Z&\Z\oplus\Z_2&&&\\\hline
6&&&&&&&\Z&\Z_2&&&&\\\hline
5&&&&&&&\Z&&&&&\\\hline
4&&&&&\Z&\Z_2&&&&&&\\\hline
3&&&&&&&&&&&&\\\hline
2&&&T_{6}&T_{6}'&&&&&&&&\\\hline
1&&T_{7}&T_{7}'&&&&&&&&&\\\hline
0&\Z&\Z&&&&&&&&&&\\\hline
&-1&1&3&5&7&9&11&13&15&17&19&21\\\hline
\end{array}
$$
\caption{The Khovanov homology over $\Z$ of $P(3,-3,7)$. The homological grading is on the vertical axis, and the quantum grading on the horizontal axis to display the similarity with the spectral sequence page above. The unknown pieces are $T_i,T_i'$ as described above.}
\label{Khov337}
\end{table}

Based on computations of the cases for small values of $q$, one expects that these torsion factors will all be zero.
\begin{conjecture}
There are no additional torsion factors appearing in the Khovanov homology for $P(p,-p,q)$ for $q>p+2$. In the notation of tables \ref{Khov336} and \ref{Khov337}, $T_i=T_i'=0$ for all $i$.
\end{conjecture}

Note that the rest of the Khovanov homology is fully determined. There are only two places where the long exact sequence allows new torsion to occur at each stage: in the first homological grading at the first and third quantum gradings and this torsion will persist only within the two diagonals, so the homology over $\Z$ coefficients remains thin.

\begin{corollary}
The $(3,-3,q)$ pretzel knots for $q\geq 4$ have thin Khovanov homology over $\Z$.
\end{corollary}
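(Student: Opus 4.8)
The plan is to read the corollary off the data already assembled, splitting it into three pieces: the range $q \ge 5$, which the Theorem handles directly; the single leftover knot $q = 4$; and the passage to $\Z$ coefficients supplied by Section 5. Recall that a knot is \emph{thin} precisely when its Khovanov homology is supported on two adjacent diagonals, i.e.\ $Kh^{i,j} = 0$ unless $j - 2i$ takes one of two consecutive values. Since we have already shown $s(K_q) = 0$, the two diagonals in question are $j - 2i = -1$ and $j - 2i = 1$, so every step reduces to checking that the nonzero groups land in $\delta$-grading $\pm 1$.

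First I would treat $q \ge 5$ over $\Q$. For each nonzero group listed in the Theorem one checks that $j - 2i \in \{-1,1\}$: the degree-$0$ part contributes exactly the diagonals $-1$ and $1$, and a short computation with the exponents $1 + 2(q-4), 5 + 2(q-4), 7 + 2(q-4), 9 + 2(q-4), 11 + 2(q-4), 15 + 2(q-4)$ appearing in homological degrees $q-3$ through $q+3$ shows every summand again satisfies $j - 2i = \pm 1$; as all other groups vanish, $K_q$ is thin over $\Q$ for $q \ge 5$. The remaining knot $P(3,-3,4)$ (which is a knot, since exactly one pretzel parameter is even) I would dispatch by direct computation with \cite{KAtlas} or \cite{KhoHo} and inspection of the resulting table.

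For $\Z$ coefficients I would invoke the analysis of Section 5. Outside the interesting range, i.e.\ in homological degrees $r < 0$ and $r \ge q-3$, the isomorphisms $\unkh^{r-1}(P(3,-3,q-1))\{1\} \isom \unkh^{r}(P(3,-3,q))$ identify $Kh^r(K_q;\Z)$ with a $(j,i)$-shift of the corresponding group of a fixed base knot; since the shift $[1]\{1\}$ preserves $j - 2i$, thinness in this range reduces to thinness of that base case, which is checked computationally. In homological degree $0$ the group is $\Zsub{-1} \oplus \Zsub{1}$, already on the two diagonals. In the remaining degrees $1 \le r < q-3$, the long exact sequence $(\ref{LES})$ together with the rational answer forces the only possible nonzero contribution to be a torsion group $T_r$ sitting directly above the degree-$0$ generators, hence on $j - 2i = \pm 1$; this is exactly what the entries $T_{6_i}$, $T_{7_i}$ in Tables \ref{Khov336} and \ref{Khov337} record. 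The hard part is precisely this last point: one must verify that the torsion left undetermined by the long exact sequence cannot escape the two main diagonals, which is a matter of tracking $q$-gradings carefully through the maps of $(\ref{LES})$ and confirming that the chosen $\Z$-base case is itself thin. Once that bookkeeping is in place, the corollary follows from inspection of the tables for all $q \ge 4$, over both $\Q$ and $\Z$.
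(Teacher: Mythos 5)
Your decomposition — treating $q \ge 5$ over $\Q$ via the Theorem, handling $q = 4$ by direct computation, and then passing to $\Z$ via the long exact sequence analysis of Section 5 — is exactly what the paper does (the paper's ``proof'' of the corollary is just the surrounding discussion in Section 5 together with the Theorem). Your diagonal check over $\Q$ is correct: every summand in the Theorem satisfies $j - 2i \in \{-1,1\}$, and you are right that the paper silently relies on a separate computation for $P(3,-3,4)$, since the induction bottoms out at $q = 5$.

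One slip that should be corrected: you assert that ``the shift $[1]\{1\}$ preserves $j - 2i$.'' It does not --- $[1]\{1\}$ sends $\delta = j - 2i$ to $(j+1) - 2(i+1) = \delta - 1$. The $\delta$-grading is nonetheless preserved under the isomorphism $\unkh^{r-1}(D_2)\{1\} \isom \unkh^r(D)$ once one normalizes: the diagrams $D_2$ and $D$ have $n_-(D) = n_-(D_2) = 3$ but $n_+(D) = n_+(D_2) + 1$, so the normalization $q$-shifts differ by $1$ and the net shift on $Kh$ (from $K_{q-1}$ to $K_q$) is $[1]\{2\}$, which sends $\delta$ to $(j+2) - 2(i+1) = \delta$. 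Since you explicitly flag ``tracking $q$-gradings carefully'' as the hard step, this reads as a computational oversight rather than a gap in the idea, but as written the sentence would not survive the bookkeeping you yourself call for; a reader who trusted it literally would get the wrong shift. With that sentence fixed, the argument is sound and follows the paper's route.
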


Combining this with Greene's result that these knots are not quasi-alternating \cite{Greene}, we find that we have an infinite class of non-quasi-alternating knots with thin Khovanov homology. We generalize this result in the next section to other odd values of $p$. Note that for even values of $p$, the Khovanov homology of the link can have torsion off the main diagonals and thus is not always thin.
}

\section{$(p,-p,q)$ Pretzel Knots}
{
We can make the same argument to compute the Khovanov homology of other $(p,-p,q)$ pretzel knots, providing we have a base case knot $B_p$ that satisfies the following conditions
\begin{enumerate}
\item $Kh^0(B_p)=\Qsub{-1}\oplus \Qsub{1}$
\item $Kh^{1}(B_p)=0$
\item  $Kh^i(B_p)=0$ for $i<0$.
\end{enumerate}
(Alternatively we could have $Kh^{-1}=0$ and $Kh^i=0$ for $i>0$ which is the case for the mirror images of these knots).

The other nontrivial groups will determine the formula for the Khovanov homology of the $P(p,-p,q)$ knots. Namely, all $P(p,-p,q)$ knots with $q$ greater than or equal to the value of $q$ in the base case, will have $Kh^0=\Qsub{-1}\oplus \Qsub{1}$, then $Kh^i=0$ for $1\leq i \leq q+c$ where $c$ is some constant determined by the base case. The subsequent groups will be shifted versions of the higher nontrivial groups in the base case.

Homological thinness over $\Z$ coefficients will also follow in the same way as for $p=3$ from homological thinness of the base case over $\Z$, though additional torsion could theoretically show up within the main diagonals as in the previous case.

We have computationally verified that we have base cases satisfying the three required conditions for odd values $3\leq p \leq 15$. In particular the base case for these values of $p$ always occurs in the $P(p,-p,p+2)$ knot. (Recall that in the case $p=3$, the base case is $P(3,-3,5)$.) We suspect that in general the knot $P(p,-p,p+2)$ will provide the appropriate base case. Furthermore, after examining the first few cases, a clear pattern seems to arise. We can therefore extend the result slightly

\begin{theorem}
For $p=3,5,7,9,11,13,15$ and any $q\geq p+2$, the Khovanov homology for $P(p,-p,q)$ over rational coefficients is
\begin{eqnarray*}
Kh^0 & = & \Qsub{-1} \oplus \Qsub{1}\\
Kh^i & = & 0 \;\;\; (0<i\leq q-p-1)\\
Kh^{q-p} & = & \Qsub{3+2(q-p-2)} \\
Kh^{q-p+1} & = & \Qsub{7+2(q-p-2)} \\
Kh^{q-p-2+2i} & = & \Qsub{4i-1+2(q-p-2)}^{i} \oplus \Qsub{4i+1+2(q-p-2)}^{i-2} \;\;\;\; (1 < i \leq n)\\
Kh^{q-p-2+2i+1} & = & \Qsub{4i+1+2(q-p-2)}^{i-1} \oplus \Qsub{4i+3+2(q-p-2)}^i \;\;\;\; (1 < i \leq n)\\
Kh^{q-1} & = & \Qsub{2p+1+2(q-p-2)}^n \oplus \Qsub{2p+3+2(q-p-2)}^{n-1}\\
Kh^{q} & = & \Qsub{2p+3+2(q-p-2)}^n \oplus \Qsub{2p+5+2(q-p-2)}^n\\
Kh^{q-p+1} & = & \Qsub{2p+5+2(q-p-2)}^{n-1} \oplus \Qsub{2p+7+2(q-p-2)}^n\\
Kh^{q+p+1-2i} & = & \Qsub{4p+5-4i+2(q-p-2)}^i \oplus \Qsub{4p+7-4i+2(q-p-2)}^{i-1} \;\;\;\; (1<i\leq n)\\
Kh^{q+p+2-2i} & = & \Qsub{4p+7-4i+2(q-p-2)}^{i-2} \oplus \Qsub{4p+9-4i+2(q-p-2)}^i \;\;\;\; (1<i\leq n)\\
Kh^{q+p-1} & = & \Qsub{4p+1+2(q-p-2)}\\
Kh^{q+p} & = & \Qsub{4p+5+2(q-p-2)}
\end{eqnarray*}
where $n=(p-1)/2$.
\label{ppqthm}
\end{theorem}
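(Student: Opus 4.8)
The plan is to rerun the induction on $q$ from the proof of Theorem 1 almost verbatim, with the base case $P(3,-3,5)$ replaced by $B_p=P(p,-p,p+2)$, and to read the displayed closed form off the base case by bookkeeping the grading shifts in the long exact sequence (\ref{LES}). Apart from the base-case computation, every step is uniform in $p$.

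First I would record the base case: for each odd $p$ with $3\le p\le 15$, compute $Kh(B_p)$ over $\Q$ using \cite{KAtlas} or \cite{KhoHo} and check that it satisfies conditions (1)--(3) above and that it agrees with the formula of the theorem specialized to $q=p+2$, where the shift $2(q-p-2)$ vanishes and $n=(p-1)/2$. Granting this finite check, the induction runs as follows. For $q>p+2$, resolve the last crossing of the $q$-column of $D=P(p,-p,q)$: as in the $p=3$ proof (the band move is the same), the $0$-resolution $D_1$ is a diagram of the $2$-component unlink and the $1$-resolution $D_2$ is a diagram of $P(p,-p,q-1)$, and both have $n_-=p$ and $n_+=p+q-1$. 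Since $\unkh(D_1)$ is concentrated in a single homological degree, (\ref{LES}) yields $\unkh^{i-1}(D_2)\{1\}\isom\unkh^i(D)$ for all but two consecutive values of $i$ near the bottom; renormalizing, $Kh^j(P(p,-p,q))\isom Kh^{j-1}(P(p,-p,q-1))\{2\}$ for $j\neq 0,1$. Iterating from $q=p+2$ produces precisely the shift $[q-p-2]\{2(q-p-2)\}$ visible in the statement, so the inductive hypothesis delivers $Kh^j(P(p,-p,q))$ for every $j\neq 0,1$; in particular $Kh^{-1}=0$ and $Kh^2=0$ when $q>p+2$.

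It remains to compute $Kh^0$ and $Kh^1$. As in the $p=3$ case, the relevant segment of (\ref{LES}) is a four-term exact sequence whose two middle terms are an appropriate shift of the unlink homology $Kh^0(D_1)=\Qsub{-2}\oplus\Qsub{0}^2\oplus\Qsub{2}$ and an appropriate shift of $Kh^0(D_2)=Kh^0(P(p,-p,q-1))=\Qsub{-1}\oplus\Qsub{1}$ supplied by the inductive hypothesis. Because every map preserves $q$-grading, the linear-algebra argument used for (\ref{SES}) forces $\Qsub{-1}\oplus\Qsub{1}\subseteq Kh^0(P(p,-p,q))$, confines $Kh^1(P(p,-p,q))$ to $q$-gradings $1$ and $3$, and leaves exactly four candidates for the pair $(Kh^0,Kh^1)$, parametrized by $a,b\in\{0,1\}$ precisely as in the table following (\ref{SES}). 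To eliminate $a$ and $b$ I would reuse the Lee--Rasmussen argument verbatim: the cobordism of Figure~\ref{cobordism} exhibits $P(p,-p,q)$ as a slice knot for every odd $p$, so $s(P(p,-p,q))=0$; the only copies of $\Q$ in $q$-gradings $-1$ and $1$ on the $E_1$-page lie in homological degrees $0$ and $1$; and, by the step just completed, homological degree $2$ of $Kh(P(p,-p,q))$ is zero, homological degree $1$ is zero in $q$-gradings $>3$, and homological degree $-1$ is zero. As in Table~\ref{spectral}, together with $d_r^{0,-1}=0$ and the fact that $d_r^{0,1}$ acts nontrivially on only one generator (both forced by $s=0$), these vanishings make every differential into or out of $E^{1,1}$ and out of $E^{0,3}$ trivial, so any nonzero $a$ or $b$ would leave a third rational class alive on $E_\infty$, contradicting the fact that $Kh'(P(p,-p,q))$ has rank $2$. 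Hence $a=b=0$, $Kh^0=\Qsub{-1}\oplus\Qsub{1}$ and $Kh^1=0$, which closes the induction.

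The genuine work is bookkeeping rather than new ideas, and I expect two obstacles. First, the base case is non-uniform: absent a conceptual description of $Kh(B_p)$, one must verify both conditions (1)--(3) and the agreement with the displayed formula for each of the seven values $p=3,5,\dots,15$ by machine, and recognize that the results organize into the stated pattern with $n=(p-1)/2$. Second, one must confirm that in the inductive step nothing propagated down from larger $q$ lands in homological degrees $1$ or $2$ in small $q$-gradings --- that is, that the two main diagonals carrying $\unkh(D_2)$ stay clear of the region where the Rasmussen argument operates. This follows by inspecting the support of the already-established part of the homology, but it is the point at which the genericity in $p$ of the argument actually has to be checked.
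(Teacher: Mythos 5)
Your proposal is correct and follows the same route the paper takes: it implicitly runs the Theorem~1 induction (resolving one crossing of the $q$-column, using the long exact sequence with $D_1$ the two-component unlink and $D_2 = P(p,-p,q-1)$, then using the sliceness of $P(p,-p,q)$ and Rasmussen's $s$-invariant to pin down $Kh^0$ and $Kh^1$ via the Lee spectral sequence), starting from the computed base case $B_p = P(p,-p,p+2)$. The paper itself gives essentially no detail beyond this assertion; your write-up correctly supplies the uniform-in-$p$ bookkeeping (the $Kh^j(P(p,-p,q)) \cong Kh^{j-1}(P(p,-p,q-1))\{2\}$ shift for $j \neq 0,1$, the two main diagonals staying clear of bidegrees $(1,1)$, $(0,3)$, and $(-1,*)$) and flags precisely the places where a finite machine check is irreducible.
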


\begin{conjecture}
The formula in theorem \ref{ppqthm} holds for $P(p,-p,q)$ for all odd values of $p\geq 17$ and $q\geq p+2$.
\end{conjecture}

When $p$ is even, $P(p,-p,q)$ is a two or three component link (depending on the parity of $q$) and the computation is considerably more complicated. There can be nontrivial homology outside of the main diagonals when $p$ is even so the Khovanov homology is not thin. For example, the Khovanov homology over $\Z$ of $P(2,-2,5)=L_{9n4}$, ($9^2_{43}$ in Rolfsen notation) has two copies of $\Z$ off the main diagonal. Because we can work easily only with odd values of $p$, establishing the base cases via induction on $p$ would require a way to jump from the computation of $P(p,-p,p+2)$ to $P(p+2,-p-2, p+4)$, where the total number of crossings increases by $6$. Thus a different method of induction than the long exact sequence used for theorem \ref{mainthm} would be required for the full generalization to arbitrary odd values of $p$.

}

%%%%%%%%%%%%%%%%%%%%%%%%%%%%%%%%%%%%%%%%%%%%%%%%%%%%%%%%%%%%%%%%%%%%%%%%%%%%%%%%%%%%%
%
%                  REFERENCES
%
%%%%%%%%%%%%%%%%%%%%%%%%%%%%%%%%%%%%%%%%%%%%%%%%%%%%%%%%%%%%%%%%%%%%%%%%%%%%%%%%%%%%%

\bibliographystyle{amsplain}

\end{document}